%-----------------------------------------------------------------------
% Beginning of paper.tex
%-----------------------------------------------------------------------
%
%     This is a topmatter template file for JAMS for use with AMS-LaTeX.
%
%     Templates for various common text, math and figure elements are
%     given following the \end{document} line.
%
%%%%%%%%%%%%%%%%%%%%%%%%%%%%%%%%%%%%%%%%%%%%%%%%%%%%%%%%%%%%%%%%%%%%%%%%

%     Remove any commented or uncommented macros you do not use.

\documentclass{amsart}

\usepackage{amssymb,latexsym,amsmath}

\newtheorem{theorem}{Theorem}[section]
\newtheorem{lemma}[theorem]{Lemma}

\theoremstyle{definition}

\theoremstyle{remark}
\newtheorem{remark}[theorem]{Remark}

\numberwithin{equation}{section}

\newcommand{\abs}[1]{\lvert#1\rvert}

\begin{document}

\title{The $L^p$ boundedness of the Bergman projection for a class of bounded Hartogs domains}

%    author information
\author{Liwei Chen}
\address{Department of Mathematics, Washington University in St. Louis, St. Louis, Missouri 63130}
\email{chenlw@math.wustl.edu}
\thanks{Advised by Steven Krantz}

\subjclass[2010]{Primary 32A07, 32A36. Secondary 32A50, 31B10.}

\date{Jan 09, 2014}

\keywords{Hartogs domains, Bergman projection}

\begin{abstract}
We generalize the Hartogs triangle to a class of bounded Hartogs domains, and we prove that the corresponding Bergman projection is bounded on $L^p$ if and only if $p$ is in the range $(\frac{2n}{n+1},\frac{2n}{n-1})$.
\end{abstract}

\maketitle

\section{Introduction}
Regularity properties of the Bergman projection have been studied for many years, in particular, its $L^p$ boundedness is of considerable interest. When the underlying domain has sufficiently smooth boundary, decisive results were obtained by several people working on it (e.g. \cite{F}, \cite{PS}, \cite{M1}, \cite{NRSW}, \cite{M2}, \cite{MS}, \cite{CD}, \cite{BS}, etc.). Most recently, in \cite{LS2}, Lanzani and Stein show that the Bergman projection is bounded on $L^p$ for $1<p<\infty$, when the underlying domain is strongly pseudoconvex with only $C^2$ boundary. However, when we look at some domains with serious singularities at their boundaries, the $L^p$ boundedness of the Bergman projection will no longer hold for all $p\in(1,\infty)$.\\

There is also a sequence of papers concerning domains with non-smooth boundary. In \cite{LS1}, Lanzani and Stein consider non-smooth planar domains, and the ranges for $p$ depend on different types of boundaries. In a series of papers, \cite{KP1} and \cite{KP2}, Krantz and Peloso show that the Bergman projection for the two dimensional non-smooth worm domain is bounded only when $p$ is in a range depending on the winding of the domain. In \cite{CS1} and \cite{CS2}, Chakrabarti and Shaw focus on the $\overline{\partial}$-equation and the corresponding Sobolev regularities, over the product domains and the Hartogs triangle. Recently, in \cite{Z}, Zeytuncu applies Forelli-Rudin's inflation idea to consider a class of domains of the form $\{(z,w)\in\mathbb{C}^2|\ z\in\mathbb{D},\ \abs{w}^2<\mu(z)\}$, where $\mu$ is a weight on $\mathbb{D}$. We should point out that this class of domains is slightly different from what we focus here. \\

As a well known fact, the Hartogs triangle does not possess a Stein neighborhood bases, and the Sobolev regularity of its Bergman projection does not behave well near the nontrivial singularity on the boundary. Hence, we may expect the $L^p$ regularity of the Bergman projection of the Hartogs triangle and its generalization will not behave well, too. In particular, here we figure out the exact range of $p$ for the $L^p$ boundedness of the corresponding Bergman projection. \\
  
In this paper, we study a class of bounded Hartogs domains which generalize the two dimensional Hartogs triangle $\mathbb{H}=\{(z_1,z_2)\in\mathbb{C}^2|\ \abs{z_1}<\abs{z_2}<1\}$. To be precise, for $j=1,\dots,l$, let $\Omega_j$ be a bounded smooth domain in $\mathbb{C}^{k_j}$, let $\phi_j:\Omega_j\to\mathbb{B}^{k_j}$ be a biholomorphic mapping between $\Omega_j$ and the unit ball $\mathbb{B}^{k_j}$ in $\mathbb{C}^{k_j}$ with inverse $\phi^{-1}_j$, let $m_j=\sum_{s=1}^j k_s$ with $m_0=0$ and $m_l=k<n$, and we use the notation that $\tilde{z_j}=(z_{m_{j-1}+1},\dots,z_{m_j})$ when $z\in\mathbb{C}^n$. For $1\le k<n$, we look at the bounded Hartogs domain given by $$\mathbb{H}^n_{\{k_j,\phi_j\}}=\{z\in\mathbb{C}^n|\max_{1\le j \le l}\abs{\phi_j(\tilde{z_j})}<\abs{z_{k+1}}<\cdots<\abs{z_{n}}<1\}$$ in $\mathbb{C}^n$. When $l=1$, $k=1$, $n=2$, and $\phi_1$ is the identity map, we obtain the classical Hartogs triangle.

For nontrivial examples, we can take $\phi_j$ to be nonsingular linear mappings. When $n=4$, $l=2$, $k_1=1$, $k_2=2$, $\phi_1(z_1)=2z_1-1$, $\phi_2(z_2,z_3)=(z_2+\frac{1}{2}z_3,z_3)$, we obtain a bounded domain which is the intersection of two unbounded domains,
$$\{z\in\mathbb{C}^4|\ \abs{2z_1-1}^2<\abs{z_4}^2<1,\ \abs{z_2+\frac{1}{2}z_3}^2+\abs{z_3}^2<\abs{z_4}^2<1\}.$$
We can also take $\phi_j$ to be nonlinear, then we may obtain other types of domains. When $n=3$, $l=1$, $k=k_1=2$, $\phi_1(z_1,z_2)=(\frac{z_1}{z_2-10},3z_2+1)$, the domain becomes
$$\{z\in\mathbb{C}^3|\ \abs{\frac{z_1}{z_2-10}}^2+\abs{3z_2+1}^2<\abs{z_3}^2<1\}.$$
By this consideration, the domains $\mathbb{H}^n_{\{k_j,\phi_j\}}$ can be a large class of doamins to some extent.\\

For an arbitrary domain $D$, the corresponding Bergman projection $P_D$ is originally defined on $L^2(D)$, mapping onto the Bergman space $A^2(D)=\mathcal{O}(D)\cap L^2(D)$. If $D$ is bounded, then $L^{p'}(D)\subset L^p(D)$ when $p'>p$. So the Bergman projection $P_D$ can be defined on $L^p(D)\cap L^2(D)=L^p(D)$ for all $p\ge 2$. When $1\le p<2$, the Bergman projection $P_D$ on $L^p(D)$ will mean $P_D$ on the subspace $L^p(D)\cap L^2(D)$ of $L^p(D)$. So for any $p\in [1,\infty)$, when we say the Bergman projection $P_D$ is bounded on $L^p(D)$, we mean the Bergman projection $P_D$ mapping $L^p(D)\cap L^2(D)$ onto $A^p(D)\cap L^2(D)$ is bounded, where $A^p(D)=\mathcal{O}(D)\cap L^p(D)$. With all these in mind, we can state our main result.

\begin{theorem}
For $1\le p<\infty$ and $1\le k<n$, the Bergman projection $P_{\mathbb{H}^n_{\{k_j,\phi_j\}}}$ for $\mathbb{H}^n_{\{k_j,\phi_j\}}$ is bounded on $L^p(\mathbb{H}^n_{\{k_j,\phi_j\}})$ if and only if $p$ is in the range $(\frac{2n}{n+1},\frac{2n}{n-1})$.
\end{theorem}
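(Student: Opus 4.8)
\emph{Overview and linearization.} Write $\mathbb H$ for $\mathbb H^n_{\{k_j,\phi_j\}}$. The plan is to linearize $\mathbb H$ by an explicit biholomorphism onto a Zariski‑open subset of a product of balls and discs, transport the Bergman projection there by the transformation law, and reduce to the classical weighted $L^p$ theory of the one–variable Bergman projection. Concretely, set $\eta^{(j)}=\phi_j(\tilde z_j)/z_{k+1}$ for $j=1,\dots,l$ and $\zeta_s=z_{k+s}/z_{k+s+1}$ for $1\le s<n-k$, $\zeta_{n-k}=z_n$; one checks directly that $\Phi(z)=(\eta^{(1)},\dots,\eta^{(l)},\zeta_1,\dots,\zeta_{n-k})$ is a biholomorphism of $\mathbb H$ onto $\Omega\setminus V$, where $\Omega=\prod_{j=1}^l\mathbb B^{k_j}\times\mathbb D^{\,n-k}$ and $V=\{\prod_s\zeta_s=0\}$, and since $A^2$ functions extend across the hypersurface $V$, $\Phi$ induces $A^2(\mathbb H)\cong A^2(\Omega)$. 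Because $z_{k+s}=\zeta_s\zeta_{s+1}\cdots\zeta_{n-k}$ and $\tilde z_j=\phi_j^{-1}(z_{k+1}\eta^{(j)})$, the Jacobian matrix of $\Phi^{-1}$ is block triangular, and a short computation yields
\begin{equation*}
\abs{\det(\Phi^{-1})'}=\prod_{s=1}^{n-k}\abs{\zeta_s}^{\,k+s-1}\;\prod_{j=1}^{l}\abs{\det(\phi_j^{-1})'(z_{k+1}\eta^{(j)})},
\end{equation*}
the exponent $k+s-1$ combining the contribution $\zeta_s^{\,s-1}$ of the chain with the $\zeta_s^{\,k}$ coming from the factor $z_{k+1}^{\,k}$ in the $\phi_j$–blocks.

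\emph{Transport of the projection and reduction of the weight.} The map $Tg=(g\circ\Phi)\det\Phi'$ is a unitary of $L^2(\Omega)$ onto $L^2(\mathbb H)$ carrying $A^2(\Omega)$ onto $A^2(\mathbb H)$, so $P_{\mathbb H}=TP_\Omega T^{-1}$; by the change of variables $z=\Phi^{-1}(w)$ it is simultaneously an isometry of $L^p(\Omega,\omega\,dV)$ onto $L^p(\mathbb H)$ with weight $\omega=\abs{\det(\Phi^{-1})'}^{2-p}$. Hence $P_{\mathbb H}$ is bounded on $L^p(\mathbb H)$ (in the sense of the theorem, i.e. on $L^2\cap L^p$, which for $p<2$ then extends by density) if and only if $P_\Omega$ is bounded on $L^p(\Omega,\omega\,dV)$. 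Each $\Omega_j$ is a smooth bounded domain biholomorphic to $\mathbb B^{k_j}$, so by the boundary regularity of biholomorphic maps (Bell's extension theorem, together with nonvanishing of the Jacobian on the boundary) $\abs{\det(\phi_j^{-1})'}$ is bounded above and below on $\mathbb B^{k_j}$; the $\phi_j$–factors in $\omega$ are therefore comparable to $1$ and may be discarded, leaving the weight $\omega_0=\prod_{s=1}^{n-k}\abs{\zeta_s}^{(k+s-1)(2-p)}$.

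\emph{Factoring over the product and the one–variable estimate.} Since $\Omega$ is a product and $\omega_0$ is a product of one–variable weights living on the disc factors, $P_\Omega$ is the tensor product of the factor projections, and an iterated–operator (Fubini–Minkowski) argument reduces the weighted bound for $P_\Omega$ to: (i) $P_{\mathbb B^{k_j}}$ bounded on $L^p(\mathbb B^{k_j})$, which holds for every $1<p<\infty$ by the classical estimate; and (ii) for each $s=1,\dots,n-k$, the disc Bergman projection $P_{\mathbb D}$ bounded on $L^p(\mathbb D,\abs{\zeta}^{(k+s-1)(2-p)}\,dA)$. For a power weight one has $P_{\mathbb D}$ bounded on $L^p(\mathbb D,\abs{\zeta}^b\,dA)$ if and only if $-2<b<2(p-1)$: sufficiency from a Schur test with comparison function $(1-\abs{\zeta}^2)^{-a}\abs{\zeta}^{-c}$ for suitable $a,c$, and necessity (including at each endpoint) from the test functions $\zeta^{\,m}\abs{\zeta}^{-\alpha}$, truncated near $\zeta=0$ to remain in $L^2\cap L^p$, together with their duals.

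\emph{Conclusion and the main obstacle.} Among $s=1,\dots,n-k$ the binding constraint in (ii) is $s=n-k$, i.e. $b=(n-1)(2-p)$, and $-2<(n-1)(2-p)<2(p-1)$ is precisely $\frac{2n}{n+1}<p<\frac{2n}{n-1}$; on this range $P_{\mathbb H}$ is bounded, and for $p$ outside it the failure of (ii) at $s=n-k$ propagates through the product structure (test with a function of $\zeta_{n-k}$ alone and apply $T$) to show $P_{\mathbb H}$ is unbounded. The delicate points are the bookkeeping in the first step — the exact power $k+s-1$ is what forces the final range to depend only on $n$ — and the control of the Jacobians $\det\phi_j'$, which is exactly where the hypothesis that each $\Omega_j$ is a \emph{smooth bounded} domain enters; everything after the reduction is classical harmonic analysis on the disc and the ball.
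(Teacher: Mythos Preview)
Your proof is correct and is essentially the paper's argument, repackaged: the paper also maps biholomorphically to $\prod_{j}\mathbb{B}^{k_j}\times(\mathbb{D}^*)^{n-k}$ with Jacobian $\prod_{s}\zeta_s^{\,k+s-1}$, invokes Bell's extension theorem to bound $|\det\mathcal{J}_{\phi_j}^{\mathbb{C}}|$ above and below, runs a Schur test with auxiliary function $\prod_j(1-|\tilde\eta_j|^2)^s\prod_j(1-|\eta_j|^2)^s|\eta_j|^{t_j}$ for sufficiency (your one-variable weighted criterion $-2<b<2(p-1)$ is exactly the condition that such parameters exist, via Lemma~3.3), and builds explicit test functions for necessity at $p\le\frac{2n}{n+1}$, then finishes the upper range by self-adjointness. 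The only organizational difference is that the paper proceeds in two stages---first the model $\mathbb{H}^n_{\{k_j\}}$ (Theorem~1.2), then the transfer via $\Phi=(\phi_1,\dots,\phi_l,\mathrm{id})$---whereas you compose the two biholomorphisms into one and phrase the product-domain step as a weighted $L^p$ bound for $P_{\mathbb{D}}$ with power weight $|\zeta|^{(k+s-1)(2-p)}$.
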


It is quite interesting that the boundedness range for $p$ does not depend on $\{k_j,\phi_j\}$, but only on the dimension $n$. If we take $\phi_j=id_j$, the identity map of $\mathbb{B}^{k_j}$, then we will obtain our standard model 
$$\mathbb{H}^n_{\{k_j\}}=\{z\in\mathbb{C}^n|\max_{1\le j \le l}\abs{\tilde{z_j}}<\abs{z_{k+1}}<\cdots<\abs{z_n}<1\},$$
which plays an important role in this article. For simplicity, we will mainly deal with the case $l=1$, since the general case requires no new work but clumsy notations. So we come to the $n$-dimensional generalized Hartogs triangle $$\mathbb{H}^n_{k}=\{z\in\mathbb{C}^n|\ \abs{(z_1,\dots,z_k)}<\abs{z_{k+1}}<\cdots<\abs{z_{n}}<1\}.$$ In order to emphasize our standard model, we state the special case separately as follows.

\begin{theorem}
For $1\le k<n$, the Bergman projection $P_{\mathbb{H}^n_{k}}$ for $\mathbb{H}^n_{k}$ is bounded on $L^p(\mathbb{H}^n_{k})$ when $\frac{2n}{n+1}<p<\frac{2n}{n-1}$, and is unbounded when $1\le p\le \frac{2n}{n+1}$. The same conclusion is true for our standard model $\mathbb{H}^n_{\{k_j\}}$.
\end{theorem}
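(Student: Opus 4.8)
The plan is to push the problem forward under an explicit biholomorphism onto a product domain, where the answer is classical. Let $\phi\colon z\mapsto w$ be the map $w_j=z_j/z_{k+1}$ for $1\le j\le k$, $w_i=z_i/z_{i+1}$ for $k+1\le i\le n-1$, and $w_n=z_n$. On $\mathbb{H}^n_k$ the coordinates $z_{k+1},\dots,z_n$ never vanish, and one checks directly that $\phi$ is a biholomorphism of $\mathbb{H}^n_k$ onto $\mathbb{B}^k\times(\mathbb{D}^{*})^{n-k}$ (with inverse $z_i=w_iw_{i+1}\cdots w_n$ for $i\ge k+1$ and $z_j=w_jz_{k+1}$ for $j\le k$), that $\det\phi'(z)=(z_{k+1}^{k}z_{k+2}\cdots z_n)^{-1}$, and hence that $\det(\phi^{-1})'(w)=\prod_{j=k+1}^{n}w_j^{\,j-1}$. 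Since an isolated point is a removable singularity for $L^2$ holomorphic functions, the Bergman space and projection of $\mathbb{B}^k\times(\mathbb{D}^{*})^{n-k}$ coincide with those of $\Omega:=\mathbb{B}^k\times\mathbb{D}^{n-k}$. Applying the transformation rule $K_{\mathbb{H}^n_k}(z,\zeta)=\det\phi'(z)\,\overline{\det\phi'(\zeta)}\,K_{\Omega}(\phi(z),\phi(\zeta))$ and substituting $w=\phi(\zeta)$ yields the standard equivalence: $P_{\mathbb{H}^n_k}$ is bounded on $L^p(\mathbb{H}^n_k)$ if and only if $P_\Omega$ is bounded on the weighted space $L^p\bigl(\Omega,\ \prod_{j=k+1}^{n}|w_j|^{(j-1)(2-p)}\,dV\bigr)$.

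Next I would factor over the product. One has $P_\Omega=P_{\mathbb{B}^k}\otimes P_{\mathbb{D}}\otimes\cdots\otimes P_{\mathbb{D}}$ ($n-k$ disc factors), and the weight is a product weight: trivial on the ball factor and equal to $|w_j|^{a_j}$ with $a_j=(j-1)(2-p)$ on the $j$-th disc. Since a Bergman projection is a nonzero operator, and since the one-variable boundedness used below comes from Schur's test (so that it survives tensoring with the identity on the remaining variables), boundedness of $P_\Omega$ on this product weighted $L^p$ is equivalent to boundedness of every tensor factor on its own weighted $L^p$. As $P_{\mathbb{B}^k}$ is bounded on $L^p(\mathbb{B}^k)$ for all $1<p<\infty$, the whole problem collapses to the one-variable question: for which $(p,a)$ is the (unweighted) Bergman projection $P_{\mathbb{D}}$ bounded on $L^p(\mathbb{D},|w|^{a}\,dV)$?

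For this I would show $P_{\mathbb{D}}$ is bounded on $L^p(\mathbb{D},|w|^{a}\,dV)$ if and only if $-2<a<2(p-1)$ (equivalently, iff $|w|^{a}$ satisfies the Bekoll\'e--Bonami $B_p$ condition). Sufficiency is Schur's test with a weight of the form $|w|^{-\sigma}(1-|w|^2)^{-\tau}$: near $|w|=1$ the density $|w|^{a}$ is bounded above and below and the unweighted disc theory applies, while near $w=0$ the factor $|1-w\bar v|^{-2}$ is comparable to $1$ and everything reduces to an elementary radial computation. For necessity, restricting the kernel to $\{|w|,|v|<\tfrac12\}$, where $|1-w\bar v|\asymp1$, produces a positive operator bounded below by a constant times the rank-one operator $g\mapsto\bigl(\int_{\{|v|<1/2\}}|v|^{-a/p}g\,dV\bigr)|w|^{a/p}$; boundedness on $L^p(|w|^{a}\,dV)$ would then force $|v|^{-a/p}\in L^{p'}$ and $|w|^{a/p}\in L^p$ near $0$, i.e. $-2<a<2(p-1)$. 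Now insert $a_j=(j-1)(2-p)$. For $p\le2$ all $a_j\ge0$, and the binding constraint is the one from $j=n$: $(n-1)(2-p)<2(p-1)\Leftrightarrow p>\frac{2n}{n+1}$; moreover $(n-1)(2-p)\ge2(p-1)$ exactly when $p\le\frac{2n}{n+1}$, and there the $j=n$ disc factor, hence $P_\Omega$, hence $P_{\mathbb{H}^n_k}$, is unbounded. For $p\ge2$ all $a_j\le0<2(p-1)$, and the binding constraint is $a_n>-2\Leftrightarrow p<\frac{2n}{n-1}$. This gives boundedness precisely for $\frac{2n}{n+1}<p<\frac{2n}{n-1}$ and unboundedness for $1\le p\le\frac{2n}{n+1}$. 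For the standard model $\mathbb{H}^n_{\{k_j\}}$ nothing changes: the analogous change of variables sends it biholomorphically onto $\mathbb{B}^{k_1}\times\cdots\times\mathbb{B}^{k_l}\times(\mathbb{D}^{*})^{n-k}$ with the same Jacobian $(z_{k+1}^{k}z_{k+2}\cdots z_n)^{-1}$, hence the same weight $\prod_{j=k+1}^{n}|w_j|^{(j-1)(2-p)}$, and the extra ball factors are harmless.

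The step I expect to be the main obstacle is the sharp endpoint: establishing that $P_{\mathbb{H}^n_k}$ is genuinely unbounded at $p=\frac{2n}{n+1}$, and not merely that one particular estimate degenerates there. This is exactly why I would extract the explicit rank-one lower bound near the origin: it gives an honest failure of boundedness at the endpoint rather than a failed sufficient condition. The remaining points are routine but worth recording: that filling in the punctures leaves the Bergman projection unchanged, and that boundedness of $P_\Omega$ on the product weighted $L^p$ is equivalent to simultaneous boundedness of all its tensor factors --- the ``only if'' by testing on product functions, the ``if'' because Schur's test provides a bounded extension of each factor to functions valued in $L^p$ of the remaining variables.
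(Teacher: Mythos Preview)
Your argument is correct and shares the paper's first move---the same biholomorphism onto $\mathbb{B}^k\times(\mathbb{D}^*)^{n-k}$ with Jacobian $\prod_{j=k+1}^n w_j^{\,j-1}$---but then diverges. The paper keeps the pushed-forward measure $|\det\mathcal{J}_G|^2\,dV$ and runs Schur's test directly on the full $n$-variable kernel with the explicit auxiliary function $h(\eta)=(1-|\tilde\eta|^2)^s\prod_{j=k+1}^n(1-|\eta_j|^2)^s|\eta_j|^{t_j}$, solving for the admissible $(s,t_{k+1},\dots,t_n)$ to obtain the range $\frac{2n}{n+1}<p<\frac{2n}{n-1}$; for unboundedness at $p\le\frac{2n}{n+1}$ it builds by hand a sequence $f_m$ depending only on $z_n$ (piecewise powers of $|z_n|$ times $(\overline{z_n}/|z_n|)^{n-1}$) with bounded $L^p$ norm but $\|P f_m\|_{L^p}\to\infty$. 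Your route instead recasts the problem as boundedness of the \emph{unweighted} projection $P_{\mathbb{B}^k}\otimes P_{\mathbb D}^{\otimes(n-k)}$ on the product-weighted space $L^p(\prod|w_j|^{(j-1)(2-p)})$, factors, and reduces everything to the one-variable Bekoll\'e--Bonami criterion $-2<a<2(p-1)$ for $P_{\mathbb D}$ on $L^p(|w|^a)$. This is cleaner and more modular: it immediately gives the upper endpoint $p<\frac{2n}{n-1}$ as well (the paper defers that to a duality step in Theorem~1.1), and it makes transparent why only the $j=n$ factor matters. What the paper's approach buys is self-containment---no appeal to an outside weighted theorem---and an explicit, elementary counterexample at the endpoint.

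One caveat: your rank-one necessity sketch is not literally correct as written, because the kernel $(1-w\bar v)^{-2}$ is complex-valued, so ``a positive operator bounded below by'' does not apply verbatim. The clean fix is to test directly: for $a\le-2$ take $g=\chi_{\{1/4<|v|<1/2\}}$, so $g\in L^p(|w|^a)\cap L^2$ and $Pg(0)>0$, whence $\|Pg\|_{L^p(|w|^a)}=\infty$; then handle $a\ge 2(p-1)$ by the duality $L^p(|w|^a)\leftrightarrow L^{p'}(|w|^{-2})$ that you already have available since $p>1$ throughout the relevant range. With that adjustment (or simply by citing Bekoll\'e--Bonami outright) your endpoint $p=\frac{2n}{n+1}$ really does fail, and the tensor-product argument transports the failure back to $\mathbb{H}^n_k$---which is precisely what the paper's explicit $f_m$ accomplishes by other means.
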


\begin{remark}
Indeed, for $p\in (\frac{2n}{n+1},\frac{2n}{n-1})$, by a simple argument, we can extend the Bergman projection $P_{\mathbb{H}^n_{k}}$ to all the space  $L^p(\mathbb{H}^n_{k})$, not just defined on the subspace $L^p(\mathbb{H}^n_{k})\cap L^2(\mathbb{H}^n_{k})$. The same is true for $\mathbb{H}^n_{\{k_j\}}$ and $\mathbb{H}^n_{\{k_j,\phi_j\}}$.
\end{remark}

\begin{remark}
For $l=1$, when $n=k$, the generalized Hartogs triangle will degenerate to the unit ball $\mathbb{B}^k$ in $\mathbb{C}^k$. A classical result states that the Bergman projection is bounded on $L^p$ for $p\in (1,\infty)$. Moreover, the domain $\mathbb{H}^k_{k,\phi_1}$ will degenerate to the smoothly bounded domain $\Omega$, which indeed is strongly pseudoconvex. Therefore the $L^p$ boundedness of the Bergman projection is true for $p\in (1,\infty)$ by the result in \cite{LS2}. In fact, if we follow the method in the proof of theorem 1.1, for a general $l$, when $n=k$, we can see that the same result holds for the degenerate domain.
\end{remark}

Our starting point is to study the Bergman projection for the classical Hartogs triangle by following the method used in \cite{HKZ}, where they only deal with the unit disk in $\mathbb{C}$. After applying a biholomorphism to transfer the Hartogs triangle to a product domain, we find that we can use a similar estimate (compared to the unit disk) to control the absolute value of the determinant of the Jaccobian from the transformation, but we have to make a restriction on $p$, namely $p\in(\frac{4}{3},4)$. It is a little bit tricky to find a bounded sequence $\{f_m\}$ in $L^p\cap L^2$, such that the sequence $\{P_{\mathbb{H}}(f_m)\}$ blows up in $L^p$, when $p=\frac{4}{3}$. Further, we find that this idea can be generalized to higher dimensions, so we set up the $n$-dimensional generalized Hartogs triangle $\mathbb{H}^n_{k}$ and the standard model $\mathbb{H}^n_{\{k_j\}}$. Finally, by Bell's extension theorem (see \cite{B}), we find that the $L^p$ boundedness of the Bergman projection will be preserved for a more general class of bounded Hartogs domains $\mathbb{H}^n_{\{k_j,\phi_j\}}$.

\subsection*{Acknowledgements}
The author thanks his thesis advisor Prof. S. Krantz for very helpful comments and suggestions on his research, and the support of typing this paper.\\

\section{transfer to a product domain}
We start the proof by transferring the $n$-dimensional generalized Hartogs triangle to a product domain.
\subsection{The biholomorphism}
Let $\mathbb{D}$ be the unit disk in $\mathbb{C}$, and let $\mathbb{D}^*=\mathbb{D}\backslash\{0\}$ be the punctured disk. Let us look at the $n$-dimensional generalized Hartogs triangle 
$$\mathbb{H}^n_{k}=\{z\in\mathbb{C}^n|\ \abs{(z_1,\dots,z_k)}<\abs{z_{k+1}}<\cdots<\abs{z_{n}}<1\}.$$
Define a map $F: \mathbb{H}^n_{k}\to \mathbb{B}^k\times\mathbb{D}^*\times\cdots\times\mathbb{D}^*$ ($n-k$ copies of $\mathbb{D}^*$) given by 
$$F(z_1,\dots,z_n)=\Big(\frac{z_1}{z_{k+1}},\dots,\frac{z_k}{z_{k+1}},\frac{z_{k+1}}{z_{k+2}},\dots,\frac{z_{n-1}}{z_n},z_n\Big),$$
which can be easily seen to be a biholomorphism with inverse
$$G(w_1,\dots,w_n)=\big(w_1(w_{k+1}\cdots w_n),\dots,w_k(w_{k+1}\cdots w_n),(w_{k+1}\cdots w_n),\dots,(w_{n-1}w_n),w_n\big).$$
By a direct computation, we see that the determinant of the Jaccobian of $G$ is given by $\det\mathcal{J}_{G}^{\mathbb{C}}(w)=w_{k+1}^k\cdots w_n^{n-1}$.

\subsection{The Bergman kernels}
Consider the punctured disk $\mathbb{D}^{*}$, any function $f\in\mathcal{O}(\mathbb{D}^*)$ has a Laurent expansion $f(w)=\sum_{j=-\infty}^{\infty}a_jw^j$ converging uniformly on compact subsets. A direct computation shows that $w^j\in L^2(\mathbb{D}^*)$ when $j\ge 0$, and they are orthogonal. Hence, it follows easily that the set $\{w^j\}_{j\ge0}$ is a complete basis in the Bergman space $A^2(\mathbb{D}^*)$. If we normalize the area measure, namely $dV(w)=\frac{1}{\pi}dudv$ with $w=u+iv$, such that $V(\mathbb{D}^*)=1$, then we obtain the Bergman kernel function on $\mathbb{D}^*\times\mathbb{D}^*$ given by 
$$K_{\mathbb{D}^*}(w,\eta)=\frac{1}{(1-w\overline{\eta})^2}.$$

For the unit ball $\mathbb{B}^k$ in $\mathbb{C}^k$, again we normalize the volume measure, such that $V(\mathbb{B}^k)=1$, then we know that the Bergman kernel function on $\mathbb{B}^k\times\mathbb{B}^k$ will be
$$K_{\mathbb{B}^k}(w,\eta)=\frac{1}{(1-\langle w,\eta\rangle)^{k+1}},$$
where $\langle w,\eta\rangle=\sum_{j=1}^{k}w_j\overline{\eta_j}$, for $w,\eta\in\mathbb{B}^k$.

By above consideration, for the product model $\mathbb{B}^k\times\mathbb{D}^*\times\cdots\times\mathbb{D}^*$, if we use the notation $\tilde{w}=(w_1,\dots,w_k)$ (when $w\in\mathbb{C}^n$), then by the formula for the Bergman kernel function on product domain, we obtain
$$K_{ \mathbb{B}^k\times\mathbb{D}^*\times\cdots\times\mathbb{D}^*}(w,\eta)=\frac{1}{(1-\langle \tilde{w},\tilde{\eta}\rangle)^{k+1}}\cdot\frac{1}{(1-w_{k+1}\overline{\eta_{k+1}})^2}\cdot\cdots\cdot\frac{1}{(1-w_{n}\overline{\eta_{n}})^2}.$$
Since we have the biholomorphism $G: \mathbb{B}^k\times\mathbb{D}^*\times\cdots\times\mathbb{D}^*\to\mathbb{H}^n_{k}$, by the transformation formula for the Bergman kernel functions, we obtain
\begin{equation}
\begin{split}
K_{\mathbb{H}_k^n}(z,\zeta)
&=K_{\mathbb{H}_k^n}(G(w),G(\eta))\\
&=\frac{1}{\det\mathcal{J}_G^{\mathbb{C}}(w)\overline{\det\mathcal{J}_G^{\mathbb{C}}(\eta)}(1-\langle \tilde{w},\tilde{\eta}\rangle)^{k+1}\prod_{j=k+1}^{n}(1-w_j\overline{\eta_j})^2},
\end{split}
\end{equation}
for $z,\zeta\in\mathbb{H}_k^n$ and $w,\eta\in\mathbb{B}^k\times\mathbb{D}^*\times\cdots\times\mathbb{D}^*$.\\

\section{estimates for the kernels}

\subsection{The kernel on $\mathbb{B}^k$}
For $-1<\alpha<0$, we follow the idea in \cite{R} to obtain an estimate for the integral
\[
J_{\alpha}(w)=\int_{\mathbb{B}^k}\frac{(1-\abs{\eta}^2)^{\alpha}dV(\eta)}{\abs{1-\langle w,\eta\rangle}^{k+1}},
\]
where $w\in\mathbb{B}^k$ (the restriction $\alpha>-1$ makes the integral convergent). First of all, we need the following.

\begin{lemma}
Let $\nu=(\nu_1,\dots,\nu_k)$ be a multi-index, and let $\sigma$ be the normalized area measure on $\mathbb{S}^k$ (the boundary $\partial\mathbb{B}^k$), i.e. $\sigma(\mathbb{S}^k)=1$. Then 
\[
\int_{\mathbb{S}^k}\abs{\xi^{\nu}}^2d\sigma(\xi)=\frac{(k-1)!(\nu)!}{(\abs{\nu}+k-1)!}.
\]
\end{lemma}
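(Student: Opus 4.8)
The plan is to compute $\int_{\mathbb{S}^k}\abs{\xi^{\nu}}^2\,d\sigma(\xi)$ by relating the surface integral over the sphere to a Gaussian integral over all of $\mathbb{C}^k$, where the computation factors. First I would recall that for a continuous function $g$ on $\mathbb{C}^k$ that is homogeneous of degree $2\abs{\nu}$ (such as $g(\xi)=\abs{\xi^{\nu}}^2$), one has the polar-coordinate identity
\[
\int_{\mathbb{C}^k}g(\xi)\,e^{-\abs{\xi}^2}\,dV(\xi)=\Big(\int_0^\infty r^{2\abs{\nu}}e^{-r^2}\,2k\,r^{2k-1}\,dr\Big)\int_{\mathbb{S}^k}g(\xi)\,d\sigma(\xi),
\]
where the normalization $\sigma(\mathbb{S}^k)=1$ is exactly the one that makes $dV=\,$(radial part)$\,\times\,d\sigma$ with $V(\mathbb{B}^k)=1$. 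The radial integral is a Gamma function: substituting $t=r^2$ gives $k\int_0^\infty t^{\abs{\nu}+k-1}e^{-t}\,dt=k\,\Gamma(\abs{\nu}+k)=k\,(\abs{\nu}+k-1)!$.

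Next I would evaluate the left-hand side directly by separating variables. Writing $\xi_j=x_j+iy_j$, we have $\abs{\xi^\nu}^2=\prod_{j=1}^k\abs{\xi_j}^{2\nu_j}=\prod_{j=1}^k(x_j^2+y_j^2)^{\nu_j}$, so the integral factors over $j$, each factor being $\frac1\pi\int_{\mathbb{C}}\abs{\xi_j}^{2\nu_j}e^{-\abs{\xi_j}^2}\,dx_j\,dy_j$ (the $\frac1\pi$ coming from the normalization $dV(w)=\frac1\pi\,du\,dv$ per complex variable). Passing to polar coordinates in each plane and substituting $t=r^2$ again turns each factor into $\int_0^\infty t^{\nu_j}e^{-t}\,dt=\Gamma(\nu_j+1)=\nu_j!$. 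Hence $\int_{\mathbb{C}^k}\abs{\xi^\nu}^2 e^{-\abs{\xi}^2}\,dV(\xi)=\prod_{j=1}^k\nu_j!=(\nu)!$.

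Combining the two evaluations gives $(\nu)!=k\,(\abs{\nu}+k-1)!\cdot\int_{\mathbb{S}^k}\abs{\xi^\nu}^2\,d\sigma(\xi)$, and solving yields
\[
\int_{\mathbb{S}^k}\abs{\xi^\nu}^2\,d\sigma(\xi)=\frac{(\nu)!}{k\,(\abs{\nu}+k-1)!}=\frac{(k-1)!\,(\nu)!}{(\abs{\nu}+k-1)!},
\]
using $k\cdot(k-1)!=k!$ only implicitly — note $k\,(\abs{\nu}+k-1)!$ is the denominator and $(k-1)!$ appears after writing $1/k$ as $(k-1)!/k!$; more cleanly, the factor $k$ in the radial integral is $\sigma$'s normalization constant, and tracking it against the volume normalization $V(\mathbb{B}^k)=1$ produces exactly $(k-1)!$ in the numerator. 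I expect the only real subtlety to be bookkeeping of the two competing normalizations (the $\frac1\pi$ per complex variable in $dV$ versus $\sigma(\mathbb{S}^k)=1$ on the sphere); once the constant $c_k$ in $dV=c_k\,r^{2k-1}\,dr\,d\sigma$ is pinned down by integrating $g\equiv1$ (which forces $c_k\int_0^1 r^{2k-1}\,dr=1$, i.e. $c_k=2k$), everything else is a pair of Gamma-function computations. An alternative, essentially equivalent, route is to expand a known generating identity such as $\int_{\mathbb{S}^k}\abs{\langle\xi,\xi\rangle}$-type integrals or to use the reproducing property of monomials in $A^2(\mathbb{B}^k)$ against the kernel $(1-\langle w,\eta\rangle)^{-(k+1)}$; I would mention this only as a remark, since the Gaussian-integral argument is self-contained and shortest.
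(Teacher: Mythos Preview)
Your approach is exactly the paper's: compute the Gaussian integral $\int_{\mathbb{C}^k}\abs{z^{\nu}}^2e^{-\abs{z}^2}$ two ways (factored over coordinates, then in polar form) and equate. The paper carries this out with the unnormalized Lebesgue measure $dm$ and the polar relation $dm(z)=2kr^{2k-1}\frac{\pi^k}{k!}\,dr\,d\sigma(\xi)$, obtaining $\pi^k(\nu)!$ on one side and $\frac{\pi^k}{(k-1)!}(\abs{\nu}+k-1)!\int_{\mathbb{S}^k}\abs{\xi^\nu}^2d\sigma$ on the other.

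However, your bookkeeping of the normalization is off, and the displayed equality $\frac{(\nu)!}{k\,(\abs{\nu}+k-1)!}=\frac{(k-1)!\,(\nu)!}{(\abs{\nu}+k-1)!}$ is simply false for $k\ge 2$. The source of the error is the claim that the normalized volume on $\mathbb{B}^k$ is ``$\tfrac{1}{\pi}$ per complex variable'': that gives $V(\mathbb{B}^k)=\frac{1}{k!}$, not $1$. The condition $V(\mathbb{B}^k)=1$ forces $dV=\frac{k!}{\pi^k}\,dm$, so the factored Gaussian integral is $\frac{k!}{\pi^k}\cdot\pi^k(\nu)!=k!\,(\nu)!$, not $(\nu)!$. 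With that missing $k!$, equating with your (correct) radial side $k\,(\abs{\nu}+k-1)!\int_{\mathbb{S}^k}\abs{\xi^\nu}^2d\sigma$ yields the stated formula honestly, without the hand-wave about ``tracking the constant'' at the end.
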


\begin{proof}
Let $dm$ denote the standard volume form on the Euclidean space, then we have the integral
\begin{equation}
\begin{split}
\int_{\mathbb{C}^k}\abs{z^{\nu}}^2\exp(-\abs{z}^2)dm(z)
&=\prod_{j=1}^k\int_{\mathbb{C}}\abs{z_j}^{2\nu_j}\exp(-\abs{z_j}^2)dm(z_j)\\
&=\prod_{j=1}^k(2\pi)\int_{0}^{\infty}r^{2\nu_j}e^{-r^2}rdr\\
&=\prod_{j=1}^k\pi(\nu_j)!\\
&=\pi^k(\nu)!.
\end{split}
\end{equation}
On the other hand, identifying $\mathbb{C}^k=\mathbb{R}^{2k}$, and using the polar coordinates $z=r\xi$ and $dm(z)=2kr^{2k-1}\frac{\pi^k}{k!}drd\sigma(\xi)$, we have
\begin{equation}
\begin{split}
\int_{\mathbb{C}^k}\abs{z^{\nu}}^2\exp(-\abs{z}^2)dm(z)
&=2k\cdot\frac{\pi^k}{k!}\int_{0}^{\infty}r^{2\abs{\nu}+2k-1}e^{-r^2}dr\int_{\mathbb{S}^k}\abs{\xi^{\nu}}^2d\sigma(\xi)\\
&=\frac{\pi^k}{(k-1)!}\cdot(\abs{\nu}+k-1)!\int_{\mathbb{S}^k}\abs{\xi^{\nu}}^2d\sigma(\xi).
\end{split}
\end{equation}
Hence, comparing $(3.1)$ and $(3.2)$, we have $\int_{\mathbb{S}^k}\abs{\xi^{\nu}}^2d\sigma(\xi)=\frac{(k-1)!(\nu)!}{(\abs{\nu}+k-1)!}$.
\end{proof}

With the lemma above, we can show the estimate for the integral $J_{\alpha}(w)$.

\begin{lemma}
For $-1<\alpha<0$, we have $J_{\alpha}(w)\sim(1-\abs{w}^2)^{\alpha}$, for any $w\in\mathbb{B}^k$.
\end{lemma}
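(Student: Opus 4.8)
The plan is to expand the kernel $|1-\langle w,\eta\rangle|^{-(k+1)}$ using the binomial series and integrate term by term, reducing everything to the moment computation from Lemma 3.2. First I would write $\eta = r\xi$ with $r\in(0,1)$ and $\xi\in\mathbb{S}^k$, so that $J_\alpha(w)$ becomes an iterated integral; by the $U(k)$-invariance of $\sigma$ we may assume $w = |w|e_1$, i.e. replace $\langle w,\eta\rangle$ by $|w|r\,\xi_1$. Using $(1-\zeta)^{-(k+1)} = \sum_{m\ge 0}\binom{m+k}{k}\zeta^m$ and its conjugate, the cross terms involving $\xi_1^a\overline{\xi_1}^b$ with $a\ne b$ integrate to zero over $\mathbb{S}^k$, leaving
\[
\int_{\mathbb{S}^k}\frac{d\sigma(\xi)}{|1-|w|r\xi_1|^{k+1}} \ \text{``}=\text{''}\ \sum_{m\ge 0}\binom{m+k}{k}^2 (|w|r)^{2m}\int_{\mathbb{S}^k}|\xi_1|^{2m}\,d\sigma(\xi).
\]
By Lemma 3.2 with $\nu=(m,0,\dots,0)$ the last integral equals $\frac{(k-1)!\,m!}{(m+k-1)!}$, and $\binom{m+k}{k}^2\frac{(k-1)!\,m!}{(m+k-1)!}$ simplifies to a constant times $\frac{(m+k)!}{k!\,m!}\cdot\frac{1}{k}$ up to bounded factors; the upshot is that this spherical integral behaves, as a function of $t=|w|r\in[0,1)$, like $\sum_{m\ge0}\binom{m+k}{k}t^{2m}\sim(1-t^2)^{-(k+1)}$ — consistent with the known asymptotic for the Forelli–Rudin-type integrals.

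Next I would insert this into the radial integral: with the polar volume formula $dV(\eta)= 2k\,r^{2k-1}\,dr\,d\sigma(\xi)$ (normalized so $V(\mathbb{B}^k)=1$),
\[
J_\alpha(|w|e_1) \sim \int_0^1 (1-r^2)^\alpha\, r^{2k-1}\,\Big(1-|w|^2 r^2\Big)^{-(k+1)}\,dr.
\]
Now the problem is a purely one-variable asymptotic question: show that this integral is comparable to $(1-|w|^2)^\alpha$ as $|w|\to 1^-$, uniformly, and bounded above and below by positive constants for $|w|$ away from $1$. For $|w|$ bounded away from $1$ both statements are trivial since the integrand is harmless. For $|w|\to1$, I would substitute $1-r^2 = s$, so the integral becomes (up to constants and a bounded factor $r^{2k-2}\to 1$) $\int_0^1 s^\alpha\,(1-|w|^2 + |w|^2 s)^{-(k+1)}\,ds$; splitting the range at $s = 1-|w|^2$ and estimating each piece separately gives the matching upper and lower bounds $\sim (1-|w|^2)^{\alpha}$, using $-1<\alpha<0$ to guarantee convergence of $\int_0^\epsilon s^\alpha\,ds$ and the fact that $\alpha-(k+1) < -1$ to control the tail.

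The main obstacle I anticipate is making the term-by-term manipulation of the divergent-looking series rigorous: the series $\sum \binom{m+k}{k} t^{2m}$ converges only for $t<1$, and after multiplying by $(1-r^2)^\alpha r^{2k-1}$ and integrating in $r$ one must justify interchanging sum and integral (dominated convergence on $r\in(0,1-\delta)$ plus a separate tail estimate, or simply working with the closed-form kernel identity $\int_{\mathbb{S}^k}|1-\langle w,\xi\rangle|^{-(k+1)}d\sigma(\xi)$ directly rather than through its series). An alternative cleaner route that sidesteps the series entirely is to quote the standard Forelli–Rudin estimate (which is presumably what ``we follow the idea in \cite{R}'' refers to): $\int_{\mathbb{S}^k}|1-\langle w,\xi\rangle|^{-(k+1)}\,d\sigma(\xi)\sim \log\frac{1}{1-|w|^2}$ is the borderline case, while here the extra factor $(1-r^2)^\alpha$ with $\alpha>-1$ improves convergence and produces the power $(1-|w|^2)^\alpha$ instead; the one-variable computation in the previous paragraph is exactly the mechanism, so I would present that computation as the heart of the proof and keep the spherical-integral input to the minimal form actually needed.
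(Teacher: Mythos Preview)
Your overall two-step strategy (integrate over the sphere first, then handle a one-variable radial integral by splitting at $s=1-|w|^2$) is a legitimate alternative to the paper's route, but the execution contains a genuine error that propagates to a wrong answer.

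The mistake is in the binomial expansion. You write ``$(1-\zeta)^{-(k+1)}=\sum_{m\ge 0}\binom{m+k}{k}\zeta^m$ and its conjugate,'' but multiplying those two series gives $|1-\zeta|^{-2(k+1)}$, not $|1-\zeta|^{-(k+1)}$. The correct factorization is
\[
|1-\zeta|^{-(k+1)}=(1-\zeta)^{-\frac{k+1}{2}}\,(1-\bar\zeta)^{-\frac{k+1}{2}},
\]
which is exactly what the paper uses, with coefficients $\Gamma(m+\tfrac{k+1}{2})/\bigl(m!\,\Gamma(\tfrac{k+1}{2})\bigr)$. With the correct half-power coefficients, together with Lemma~3.1, the spherical integral has coefficients of order a constant in $m$, so
\[
\int_{\mathbb{S}^k}\frac{d\sigma(\xi)}{|1-t\xi_1|^{k+1}}\ \sim\ (1-t^2)^{-1},
\]
\emph{not} $(1-t^2)^{-(k+1)}$. (Equivalently, in the Forelli--Rudin sphere estimate on $\mathbb{C}^k$ the borderline exponent is $k$, not $k+1$; exponent $k+1$ is the case $c=1>0$, which yields $(1-|z|^2)^{-1}$.) This matters: if you feed your claimed asymptotic $(1-|w|^2r^2)^{-(k+1)}$ into your own splitting argument, the two pieces give $(1-|w|^2)^{\alpha-k}$, not $(1-|w|^2)^{\alpha}$. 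With the corrected exponent $-1$ the splitting at $s=1-|w|^2$ works exactly as you describe and yields $(1-|w|^2)^\alpha$.

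For comparison, the paper does not resum the spherical integral at all. It keeps the expansion as a series in $|w|^{2n}$ throughout, evaluates each term via polar coordinates and Lemma~3.1 to get a Beta function $B(\alpha+1,n+k)$, and then uses Stirling to recognize the resulting series as $(1-|w|^2)^\alpha$. Your approach, once the exponent is fixed, trades that Stirling step for an elementary one-variable splitting estimate; both are fine.
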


\begin{proof}
For $w,\eta\in\mathbb{B}^k$, since $\abs{\langle w,\eta\rangle}<1$, we have the expansion
\begin{equation}
\begin{split}
\frac{1}{\abs{1-\langle w,\eta\rangle}^{k+1}}
&=\frac{1}{(1-\langle w,\eta\rangle)^{\frac{k+1}{2}}}\cdot\frac{1}{(1-\overline{\langle w,\eta\rangle})^{\frac{k+1}{2}}}\\
&=\sum_{n=0}^{\infty}\frac{\Gamma(n+\frac{k+1}{2})}{\Gamma(n+1)\Gamma(\frac{k+1}{2})}\langle w,\eta\rangle^n\sum_{m=0}^{\infty}\frac{\Gamma(m+\frac{k+1}{2})}{\Gamma(m+1)\Gamma(\frac{k+1}{2})}\overline{\langle w,\eta\rangle}^m.
\end{split}
\end{equation}
Substitute the expansion into $J_{\alpha}(w)$, and integrate term by term. By the rotational symmetries on $\mathbb{B}^k$, we obtain
\[
J_{\alpha}(w)=\sum_{n=0}^{\infty}\frac{\Gamma(n+\frac{k+1}{2})^2}{\Gamma(n+1)^2\Gamma(\frac{k+1}{2})^2}\int_{\mathbb{B}^k}\abs{\langle w,\eta\rangle}^{2n}(1-\abs{\eta}^2)^{\alpha}dV(\eta).
\]
Since $dV(\eta)$ is unitary invariant, if we apply a unitary transformation $U$ to the integral above, such that $U(w)=(\abs{w},0,\dots,0)$, then we have
\begin{equation}
\begin{split}
J_{\alpha}(w)
&=\sum_{n=0}^{\infty}\frac{\Gamma(n+\frac{k+1}{2})^2}{\Gamma(n+1)^2\Gamma(\frac{k+1}{2})^2}\int_{\mathbb{B}^k}\abs{w}^{2n}\abs{\eta_1}^{2n}(1-\abs{\eta}^2)^{\alpha}dV(\eta)\\
&=\sum_{n=0}^{\infty}\frac{\Gamma(n+\frac{k+1}{2})^2}{\Gamma(n+1)^2\Gamma(\frac{k+1}{2})^2}\abs{w}^{2n}\int_{0}^{1}(2k)r^{2n+2k-1}(1-r^2)^{\alpha}dr\int_{\mathbb{S}^k}\abs{\xi_1}^{2n}d\sigma(\xi)\\
&=\sum_{n=0}^{\infty}\frac{\Gamma(n+\frac{k+1}{2})^2}{\Gamma(n+1)^2\Gamma(\frac{k+1}{2})^2}\frac{k!n!}{(n+k-1)!}\abs{w}^{2n}\int_{0}^{1}\rho^{n+k-1}(1-\rho)^{\alpha}d\rho\\
&\sim\sum_{n=0}^{\infty}\abs{w}^{2n}B(\alpha+1,n+k)\\
&=\sum_{n=0}^{\infty}\frac{\Gamma(\alpha+1)\Gamma(n+k)}{\Gamma(n+k+\alpha+1)}\abs{w}^{2n}\\
&\sim\sum_{n=0}^{\infty}\frac{\Gamma(n-\alpha)}{\Gamma(n+1)\Gamma(-\alpha)}\abs{w}^{2n}=(1-\abs{w}^2)^{\alpha}.
\end{split}
\end{equation}
Here, for the second line, we use the polar coordinate $\eta_1=r\xi_1$. For the third line, we apply the previous lemma and the substutition $\rho=r^2$. From the fourth line through the last line, we apply the basic properties of the Beta function and the stirling's formula to estimate the Gamma functions (as $n\to\infty$). The last equality holds, since $\alpha<0$.
\end{proof}

\subsection{The kernel on $\mathbb{D}^*$}
Similarly, for $-1<\alpha<0$ and $\beta>-2$, we modify the idea used above to obtain an estimate for the integral
\[
I_{\alpha,\beta}(w)=\int_{\mathbb{D}^*}\frac{(1-\abs{\eta}^2)^{\alpha}\abs{\eta}^{\beta}dV(\eta)}{\abs{1-w\overline{\eta}}^2},
\]
where $w\in\mathbb{D}^*$ (again, the restrictions $\alpha>-1$ and $\beta>-2$ make the integral convergent).

\begin{lemma}
For $-1<\alpha<0$ and $\beta>-2$, we have $I_{\alpha,\beta}(w)\sim(1-\abs{w}^2)^{\alpha}$, for any $w\in\mathbb{D}^*$. In addition, when $\beta\le 0$, we have $I_{\alpha,\beta}(w) \lesssim (1-\abs{w}^2)^{\alpha}\abs{w}^{\beta}$, for $w\in\mathbb{D}^*$.
\end{lemma}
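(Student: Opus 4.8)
The plan is to mimic the proof of the previous lemma, replacing the binomial expansion of $\abs{1-\langle w,\eta\rangle}^{-(k+1)}$ by the (simpler) geometric expansion of $\abs{1-w\overline{\eta}}^{-2}$. For $w,\eta\in\mathbb{D}^*$ we have
$$\frac{1}{\abs{1-w\overline{\eta}}^2}=\sum_{n,m\ge0}w^n\,\overline{w}^{m}\,\eta^{m}\,\overline{\eta}^{n},$$
the series converging absolutely and uniformly on compact subsets. Substituting this into $I_{\alpha,\beta}(w)$ and integrating term by term (legitimate by Tonelli, since after inserting absolute values every term is nonnegative), the angular part of the polar-coordinate integral $\int_0^{2\pi}e^{i(m-n)\theta}\,d\theta$ annihilates every off-diagonal term $n\ne m$, and we are left with
$$I_{\alpha,\beta}(w)=\sum_{n=0}^\infty\abs{w}^{2n}\cdot 2\int_0^1 r^{2n+\beta+1}(1-r^2)^{\alpha}\,dr.$$
The substitution $\rho=r^2$ turns the radial integral into the Beta integral $B\big(n+\tfrac{\beta}{2}+1,\alpha+1\big)$, which converges for every $n\ge0$ precisely because $\beta>-2$ (forcing $n+\tfrac{\beta}{2}+1>0$) and $\alpha>-1$ (forcing $\alpha+1>0$); hence
$$I_{\alpha,\beta}(w)=\Gamma(\alpha+1)\sum_{n=0}^\infty\frac{\Gamma\big(n+\tfrac{\beta}{2}+1\big)}{\Gamma\big(n+\tfrac{\beta}{2}+\alpha+2\big)}\,\abs{w}^{2n}.$$

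Next I would compare this power series with $(1-\abs{w}^2)^{\alpha}=\sum_{n\ge0}\frac{\Gamma(n-\alpha)}{\Gamma(n+1)\Gamma(-\alpha)}\abs{w}^{2n}$. Stirling's formula shows that both families of coefficients are positive and of the precise order $n^{-\alpha-1}$ as $n\to\infty$, so their ratio tends to a positive constant. Invoking the standard fact that for positive sequences $a_n,b_n$ with $a_n/b_n\to c\in(0,\infty)$ and $\sum b_n=\infty$ one has $\sum a_n x^n\sim c\sum b_n x^n$ as $x\to1^-$ — the divergence hypothesis holding because $\alpha<0$ makes $(1-x)^{\alpha}\to\infty$ — we obtain $I_{\alpha,\beta}(w)\sim(1-\abs{w}^2)^{\alpha}$ as $\abs{w}\to1$. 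On any compact subset of $\mathbb{D}^*$ both $I_{\alpha,\beta}$ (which depends only on $\abs{w}$ and is a convergent integral) and $(1-\abs{w}^2)^{\alpha}$ are continuous and strictly positive, hence comparable there; combining the two regimes gives $I_{\alpha,\beta}(w)\sim(1-\abs{w}^2)^{\alpha}$ for all $w\in\mathbb{D}^*$.

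For the additional estimate, note that when $\beta\le0$ and $0<\abs{w}<1$ we have $\abs{w}^{\beta}\ge1$, so $(1-\abs{w}^2)^{\alpha}\le(1-\abs{w}^2)^{\alpha}\abs{w}^{\beta}$; combined with the upper bound just established this yields $I_{\alpha,\beta}(w)\lesssim(1-\abs{w}^2)^{\alpha}\abs{w}^{\beta}$ immediately.

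I expect the only real obstacle to be making the relation $\sim$ rigorous, i.e. upgrading ``matching coefficient asymptotics'' to ``matching boundary behavior'': this is the Tauberian-type comparison lemma for power series with nonnegative coefficients, together with the routine patching of the interior regime. One should also take care to use the hypotheses $-1<\alpha<0$ and $\beta>-2$ exactly where they are needed — for convergence of each Beta integral, for the definition of the Beta and Gamma factors, and for the divergence of the comparison series at $x=1$.
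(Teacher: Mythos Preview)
Your argument is correct and follows essentially the same route as the paper: expand $\abs{1-w\overline{\eta}}^{-2}$ as a double geometric series, use rotational symmetry to reduce to diagonal terms, evaluate the radial integral as a Beta function, and compare coefficients via Stirling with the binomial series for $(1-\abs{w}^2)^{\alpha}$. If anything, you are more explicit than the paper about the Tauberian step and the patching on compact subsets; the paper simply writes the chain of $\sim$'s without further comment.
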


\begin{proof}
As before, for $w,\eta\in\mathbb{D}^*$, we expand the kernel function
\[
\frac{1}{\abs{1-w\overline{\eta}}^2}=\sum_{n=0}^{\infty}(w\overline{\eta})^n\sum_{m=0}^{\infty}(\overline{w}\eta)^m.
\]
Substitute the expansion back to the integral, and integrate term by term. By the rotational symmetry on $\mathbb{D}^*$, we obtain
\begin{equation}
\begin{split}
I_{\alpha,\beta}(w)
&=\sum_{n=0}^{\infty}\int_{\mathbb{D}^*}\abs{w\eta}^{2n}(1-\abs{\eta}^2)^{\alpha}\abs{\eta}^{\beta}dV(\eta)\\
&=\sum_{n=0}^{\infty}\abs{w}^{2n}\int_{0}^{1}2r^{2n+\beta+1}(1-r^2)^{\alpha}dr\\
&=\sum_{n=0}^{\infty}\abs{w}^{2n}\int_{0}^{1}\rho^{n+\frac{\beta}{2}}(1-\rho)^{\alpha}d\rho\\
&=\sum_{n=0}^{\infty}\abs{w}^{2n}B(\alpha+1, n+\frac{\beta}{2}+1)\\
&=\sum_{n=0}^{\infty}\frac{\Gamma(\alpha+1)\Gamma(n+\frac{\beta}{2}+1)}{\Gamma(n+\frac{\beta}{2}+\alpha+2)}\abs{w}^{2n}\\
&\sim\sum_{n=0}^{\infty}\frac{\Gamma(n-\alpha)}{\Gamma(n+1)\Gamma(-\alpha)}\abs{w}^{2n}=(1-\abs{w}^2)^{\alpha}.
\end{split}
\end{equation}
Again, similar to the previous lemma, we use the polar coordinate $\eta=re^{i\theta}$, then apply the substutition $\rho=r^2$. By the basic properties of the Beta function and the stirling's formula, we obtain the asymtotic behavior for the Gamma functions (as $n\to\infty$). The last equality holds, since $\alpha<0$.

When $\beta\le 0$, since $w\in\mathbb{D}^*$, we have $\abs{w}^{\beta}\ge 1$. Hence, for any $w\in\mathbb{D}^*$, we have $I_{\alpha,\beta}(w) \lesssim (1-\abs{w}^2)^{\alpha}\abs{w}^{\beta}$ by above argument.
\end{proof}
\

\section{the schur's test}

For any domain $D$, to show the $L^p$ boundedness of the Bergman projection $P_D$, we can show a stronger statement, namely the $L^p$ boundedness of the integral operator $\abs{P_D}$ associated to the kernel $\abs{K_D}$. Here $\abs{K_D}$ is the absolute value of the Bergman kernel $K_D$ for $D$. By this consideration, we need the following version of Schur's test, which can be found in \cite{HKZ}.

\begin{theorem}[Schur's test]
Suppose $X$ is measure space with a positive measure $\mu$. Let $T(x,y)$ be a positive measurable function on $X\times X$, and let $T$ be the integral operator associated to the kernel function $T(x,y)$.

Given $p,q\in(1,\infty)$ with $\frac{1}{p}+\frac{1}{q}=1$, if there exists a strictly positive function $h$ a.e. on $X$ and a $M>0$, such that
\begin{enumerate}
\item
$\int_{X}T(x,y)h(y)^qd\mu(y)\le Mh(x)^q$, for a.e. $x\in X$, and
\item
$\int_{X}T(x,y)h(x)^pd\mu(x)\le Mh(y)^p$, for a.e. $y\in X$. 
\end{enumerate}
Then $T$ is bounded on $L^p(X,d\mu)$ with $\|T\|\le M$.
\end{theorem}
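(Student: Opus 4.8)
The plan is to prove the theorem by the classical one-shot argument: a single application of Hölder's inequality to factor the kernel against the weight $h$, followed by Tonelli's theorem to interchange the order of integration. Since $T(x,y)\ge 0$ and $h>0$ a.e., every integral that appears is an integral of a nonnegative function, so there are no convergence subtleties and Fubini--Tonelli applies without justification beyond positivity.

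First I would fix $f\in L^p(X,d\mu)$ and, for a.e.\ $x\in X$, estimate $|Tf(x)|\le\int_X T(x,y)|f(y)|\,d\mu(y)$ and rewrite the integrand as the product
\[
T(x,y)|f(y)|=\big(T(x,y)^{1/q}h(y)\big)\cdot\big(T(x,y)^{1/p}h(y)^{-1}|f(y)|\big),
\]
where the identity $\tfrac1p+\tfrac1q=1$ is exactly what makes the two factors multiply back to $T(x,y)|f(y)|$. Applying Hölder's inequality with conjugate exponents $q$ and $p$ and then hypothesis (1) yields
\[
|Tf(x)|\le\Big(\int_X T(x,y)h(y)^q\,d\mu(y)\Big)^{1/q}\Big(\int_X T(x,y)h(y)^{-p}|f(y)|^p\,d\mu(y)\Big)^{1/p}\le M^{1/q}h(x)\Big(\int_X T(x,y)h(y)^{-p}|f(y)|^p\,d\mu(y)\Big)^{1/p}.
\]

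Next I would raise both sides to the $p$-th power, integrate in $x$ over $X$, and use Tonelli's theorem to swap the two integrations, obtaining
\[
\|Tf\|_p^p\le M^{p/q}\int_X|f(y)|^p h(y)^{-p}\Big(\int_X T(x,y)h(x)^p\,d\mu(x)\Big)d\mu(y).
\]
Hypothesis (2) bounds the inner integral by $Mh(y)^p$, which cancels the factor $h(y)^{-p}$ and leaves $\|Tf\|_p^p\le M^{p/q+1}\|f\|_p^p$. Since $\tfrac1p+\tfrac1q=1$ forces $p/q+1=p$, this is precisely $\|Tf\|_p\le M\|f\|_p$, which is the assertion.

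I do not expect any genuine obstacle; the only points that need a moment's attention are choosing the exponents in the Hölder factorization so that the resulting powers of $h$ match exactly the quantities controlled by hypotheses (1) and (2), and noting that the interchange of integrals is valid simply because the integrand is nonnegative. It is also worth remarking that the computation itself shows $Tf(x)$ is finite for a.e.\ $x$ whenever $f\in L^p$, so the operator is well defined on the relevant subspace and the stated bound $\|T\|\le M$ follows.
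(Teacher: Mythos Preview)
Your proof is correct and is essentially identical to the paper's: the same H\"older factorization $T(x,y)^{1/q}h(y)\cdot T(x,y)^{1/p}h(y)^{-1}|f(y)|$, the same application of hypothesis~(1), then integration in $x$, Fubini/Tonelli, and hypothesis~(2) to finish with $\|Tf\|_p^p\le M^{p/q+1}\|f\|_p^p=M^p\|f\|_p^p$. The only difference is that you spell out the factorization and the justification for interchanging integrals, which the paper leaves implicit.
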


\begin{proof}
Let $f\in L^p(X,d\mu)$, by H\"{o}lder's inequality and (1), we have
\begin{equation}
\begin{split}
\abs{Tf(x)}
&\le \int_X T(x,y)\abs{f(y)}d\mu(y)\\
&\le \Big(\int_X T(x,y)h(y)^qd\mu(y)\Big)^{\frac{1}{q}}\Big(\int_X T(x,y)h(y)^{-p}\abs{f(y)}^pd\mu(y)\Big)^{\frac{1}{p}}\\
&\le M^{\frac{1}{q}}h(x)\Big(\int_X T(x,y)h(y)^{-p}\abs{f(y)}^pd\mu(y)\Big)^{\frac{1}{p}},
\end{split}
\end{equation}
for a.e. $x\in X$. So, by Fubini's theorem and (2), we have
\begin{equation}
\begin{split}
\int_X\abs{Tf(x)}^pd\mu(x)
&\le M^{\frac{p}{q}}\int_X h(x)^pd\mu(x)\int_X T(x,y)h(y)^{-p}\abs{f(y)}^pd\mu(y)\\
&=M^{\frac{p}{q}}\int_X T(x,y)h(x)^pd\mu(x)\int_X h(y)^{-p}\abs{f(y)}^pd\mu(y)\\
&\le M^{\frac{p}{q}+1}\int_X \abs{f(y)}^pd\mu(y)\\
&=M^p\|f\|^p.
\end{split}
\end{equation}
This completes the proof.
\end{proof}
\

\section{proof of theorem 1.2}
With the above preliminaries, we now can present the proof of theorem 1.2.
\begin{proof}
According to theorem 4.1 (Schur's test), if we take $X=\mathbb{B}^k\times\mathbb{D}^*\times\cdots\times\mathbb{D}^*$, $d\mu(\eta)=\abs{\det J_{G}^{\mathbb{C}}(\eta)}^2dV(\eta)$, and $T(w,\eta)=\abs{K_{\mathbb{H}_k^n}(G(w),G(\eta))}$ from (2.1), then for any measurable function $f$ defined on $\mathbb{H}_k^n$, we have
\[
T(f\circ G)=\abs{P_{\mathbb{H}_k^n}}(f)\circ G.
\]
So, for a given $p\in[1,\infty)$, $T$ is bounded on $L^p(X,d\mu)$ if and only if $\abs{P_{\mathbb{H}_k^n}}$ is bounded on $L^p(\mathbb{H}^n_{k})$. Now let
\[
h(\eta)=(1-\abs{\tilde{\eta}}^2)^s[(1-\abs{\eta_{k+1}}^2)\cdots(1-\abs{\eta_n}^2)]^s\abs{\eta_{k+1}}^{t_{k+1}}\cdots\abs{\eta_n}^{t_n},
\]
where $s,t_{k+1},\dots,t_n$ are real numbers to be determined later and $\tilde{\eta}=(\eta_1,\dots,\eta_k)$ as before. We need to verify the conditions (1) and (2) in theorem 4.1 to conclude the $L^p$ boundedness of $T$, for the given $p,q\in(1,\infty)$ with $\frac{1}{p}+\frac{1}{q}=1$. 

For condition (1), by lemma 3.2 and lemma 3.3, we have
\begin{equation}
\begin{split}
T(h^q)(w)
&=\int_X T(w,\eta)h(\eta)^qd\mu(\eta)\\
&=\int_{\mathbb{B}^k}\frac{(1-\abs{\tilde{\eta}}^2)^{sq}dV(\tilde{\eta})}{\abs{1-\langle\tilde{w},\tilde{\eta}\rangle}^{k+1}}\prod_{j=k+1}^n\int_{\mathbb{D}^*}\frac{(1-\abs{\eta_{j}}^2)^{sq}\abs{\eta_{j}}^{t_{j}q+j-1}dV(\eta_{j})}{\abs{w_{j}}^{j-1}\abs{1-w_{j}\overline{\eta_{j}}}^2}\\
&\le M(1-\abs{\tilde{w}}^2)^{sq}\prod_{j=k+1}^n(1-\abs{w_{j}}^2)^{sq}\abs{w_{j}}^{t_{j}q}\\
&=Mh(w)^q,
\end{split}
\end{equation}
for some $M>0$, provided
\begin{equation}
\begin{split}
&-1<sq<0,\\
&-2<t_{k+1}q+k\le 0,\\
&\vdots\\
&-2<t_nq+n-1\le 0.
\end{split}
\end{equation}
For condition (2), similar argument shows,
\begin{equation}
\begin{split}
T^*(h^p)(\eta)
&:=\int_X T(w,\eta)h(w)^pd\mu(w)\\
&=\int_{\mathbb{B}^k}\frac{(1-\abs{\tilde{w}}^2)^{sp}dV(\tilde{w})}{\abs{1-\langle\tilde{w},\tilde{\eta}\rangle}^{k+1}}\prod_{j=k+1}^n\int_{\mathbb{D}^*}\frac{(1-\abs{w_{j}}^2)^{sp}\abs{w_{j}}^{t_{j}p+j-1}dV(w_{j})}{\abs{\eta_{j}}^{j-1}\abs{1-w_{j}\overline{\eta_{j}}}^2}\\
&\le M(1-\abs{\tilde{\eta}}^2)^{sp}\prod_{j=k+1}^n(1-\abs{\eta_{j}}^2)^{sp}\abs{\eta_{j}}^{t_{j}p}\\
&=Mh(\eta)^p,
\end{split}
\end{equation}
for some $M>0$, provided
\begin{equation}
\begin{split}
&-1<sp<0,\\
&-2<t_{k+1}p+k\le 0,\\
&\vdots\\
&-2<t_np+n-1\le 0.
\end{split}
\end{equation}
From (5.2) and (5.4), we have
\[
s\in\Big(-\frac{1}{q},0\Big)\bigcap\Big(-\frac{1}{p},0\Big),
\]
\[
t_{k+1}\in\Big(-\frac{k+2}{q},-\frac{k}{q}\Big]\bigcap\Big(-\frac{k+2}{p},-\frac{k}{p}\Big],
\]
\[
\vdots
\]
\[t_n\in\Big(-\frac{n+1}{q},-\frac{n-1}{q}\Big]\bigcap\Big(-\frac{n+1}{p},-\frac{n-1}{p}\Big].
\]
So the real numbers $s,t_{k+1},\dots,t_{n}$ exist when
\[
-\frac{k}{q}>-\frac{k+2}{p},\ -\frac{k}{p}>-\frac{k+2}{q},
\]
\[
\vdots
\]
\[
-\frac{n-1}{q}>-\frac{n+1}{p},\ -\frac{n-1}{p}>-\frac{n+1}{q},
\]
or equivalently, when $\frac{2n}{n+1}<p<\frac{2n}{n-1}$. This proves the first part of the theorem.\\

To show the unboundedness of $P_{\mathbb{H}_k^n}$ for $1\le p\le \frac{2n}{n+1}$, we first look at the case $p=\frac{2n}{n+1}$.

For $j=1,2,\dots$, let $a_j=\frac{1}{j^j}$. Define $g(r)=r^{\frac{1}{j}-(n+1)}$, when $r\in(a_{j+1},a_j]$. Then we have a function $g$ defined on $(0,1]$. Now, if we look at the sequence $\{f_m\}_{m=1}^{\infty}$ given by
\[
f_m(z)=\left\{ \begin{array}{rcl}
g(\abs{z_n})\big(\frac{\overline{z_n}}{\abs{z_n}}\big)^{n-1}, & \abs{z_n}\in(a_{m+1},1]\\
0, & \abs{z_n}\in[0,a_{m+1}]
\end{array}\right.
\]
for $z\in\mathbb{H}_k^n$, then we have
\begin{equation}
\begin{split}
\|f_m\|_{L^{\frac{2n}{n+1}}(\mathbb{H}_k^n)}^{\frac{2n}{n+1}}
&=\int_{\mathbb{B}^k\times\mathbb{D}^*\times\cdots\times\mathbb{D}^*}\abs{f_m(G(w))}^{\frac{2n}{n+1}}\abs{w_{k+1}^k\cdots w_n^{n-1}}^2dV(w)\\
&\le \int_{a_{m+1}}^1g(r)^{\frac{2n}{n+1}}r^{2n-1}dr\\
&\le \sum_{j=1}^mj(a_j^{\frac{2n}{j(n+1)}}-a_{j+1}^{\frac{2n}{j(n+1)}})\\
&\le \sum_{j=1}^{\infty}j(a_j^{\frac{2n}{j(n+1)}}-a_{j+1}^{\frac{2n}{j(n+1)}})\\
&\lesssim \sum_{j=1}^{\infty}j^{-1-\frac{1}{2(n+1)}}<\infty,
\end{split}
\end{equation}
by the limit comparison test. Since each $f_m$ is bounded on $\mathbb{H}_k^n$, we see that $f_m\in L^2(\mathbb{H}_k^n)$. So we have constructed a sequence $\{f_m\}_{m=1}^{\infty}\subset L^2(\mathbb{H}_k^n)\cap L^{\frac{2n}{n+1}}(\mathbb{H}_k^n)$, with $\|f_m\|_{L^{\frac{2n}{n+1}}(\mathbb{H}_k^n)}$ bounded by a constant for all $m$.

On the other hand, for $w\in\mathbb{B}^k\times\mathbb{D}^*\times\cdots\times\mathbb{D}^*$, by the reproducing property and the rotational symmetry, we have
\begin{equation}
\begin{split}
\abs{P_{\mathbb{H}_k^n}(f_m)(G(w))}
&=\abs{\int_{\mathbb{B}^k\times\mathbb{D}^*\times\cdots\times\mathbb{D}^*}\frac{f_m(G(\eta))\eta_{k+1}^k\cdots \eta_n^{n-1}dV(\eta)}{w_{k+1}^k\cdots w_n^{n-1}(1-\langle \tilde{w},\tilde{\eta}\rangle)^{k+1}\prod_{j=k+1}^{n}(1-w_j\overline{\eta_j})^2}}\\
&=\frac{1}{\abs{w_n}^{n-1}}\abs{\int_{\mathbb{D}^*}\frac{f_m(G(\eta))\eta_n^{n-1}dV(\eta_n)}{(1-w_n\overline{\eta_n})^2}}\\
&=\frac{1}{\abs{w_n}^{n-1}}\int_{a_{m+1}<\abs{\eta_n}\le 1}g(\abs{\eta_n})\abs{\eta_n}^{n-1}dV(\eta_n)\\
&\ge \int_{a_{m+1}}^{1}g(r)r^ndr\\
&=\sum_{j=1}^mj(a_j^{\frac{1}{j}}-a_{j+1}^{\frac{1}{j}}).
\end{split}
\end{equation}
Therefore, since $V(\mathbb{H}_k^n)$ is finite,
\[
\lim_{m\to\infty}\|P_{\mathbb{H}_k^n}(f_m)\|_{L^{\frac{2n}{n+1}}(\mathbb{H}_k^n)} \ge V(\mathbb{H}_k^n)^{\frac{n+1}{2n}}\sum_{j=1}^{\infty}j(a_j^{\frac{1}{j}}-a_{j+1}^{\frac{1}{j}})\gtrsim \sum_{j=1}^{\infty}j^{-1}=\infty,
\]
again by the limit comparison test. So $P_{\mathbb{H}_k^n}$ is not bounded on $L^{\frac{2n}{n+1}}(\mathbb{H}_k^n)$.

For $1\le p< \frac{2n}{n+1}$, we take the same sequence $\{f_m\}_{m=1}^{\infty}$ given above. Notice that, by H\"{o}lder's inequality and (5,5), we have
\[
\|f_m\|_{L^p(\mathbb{H}_k^n)}\le C\|f_m\|_{L^{\frac{2n}{n+1}}(\mathbb{H}_k^n)}<C'<\infty,
\]
for some constants $C,\ C'>0$. So we agian have a sequence $\{f_m\}_{m=1}^{\infty}\subset L^2(\mathbb{H}_k^n)\cap L^{p}(\mathbb{H}_k^n)$, with $\|f_m\|_{L^{p}(\mathbb{H}_k^n)}$ bounded by a constant for all $m$. However, from (5.6), we see that
\[
\lim_{m\to\infty}\|P_{\mathbb{H}_k^n}(f_m)\|_{L^{p}(\mathbb{H}_k^n)} \ge V(\mathbb{H}_k^n)^{\frac{1}{p}}\sum_{j=1}^{\infty}j(a_j^{\frac{1}{j}}-a_{j+1}^{\frac{1}{j}})\gtrsim \sum_{j=1}^{\infty}j^{-1}=\infty.
\]
Hence, $P_{\mathbb{H}_k^n}$ is not bounded on $L^{p}(\mathbb{H}_k^n)$ for $1\le p\le \frac{2n}{n+1}$. This proves the second part of the theorem.\\

For our standard model
$$\mathbb{H}^n_{\{k_j\}}=\{z\in\mathbb{C}^n|\max_{1\le j \le l}\abs{\tilde{z_j}}<\abs{z_{k+1}}<\cdots<\abs{z_n}<1\},$$ 
we again transfer it to a product domian via the biholomorphism $\tilde{F}:\mathbb{H}^n_{\{k_j\}}\to\mathbb{B}^{k_1}\times\cdots\times\mathbb{B}^{k_l}\times\mathbb{D}^*\times\cdots\times\mathbb{D}^*$ ($n-k$ copies of $\mathbb{D}^*$) given by
\[
\tilde{F}(z_1,\dots,z_n)=(\frac{z_1}{z_{k+1}},\dots,\frac{z_k}{z_{k+1}},\frac{z_{k+1}}{z_{k+2}},\dots,\frac{z_{n-1}}{z_n},z_n),
\]
with its inverse
\[
\tilde{G}(w_1,\dots,w_n)=(w_1(w_{k+1}\cdots w_n),\dots,w_k(w_{k+1}\cdots w_n),(w_{k+1}\cdots w_n),\dots,(w_{n-1}w_n),w_n)
\]
and $\det\mathcal{J}_{\tilde{G}}^{\mathbb{C}}(w)=w_{k+1}^k\cdots w_n^{n-1}$. Similarly, we have
\[
K_{\mathbb{H}_{\{k_j\}}^n}(z,\zeta)=\frac{1}{\det\mathcal{J}_{\tilde{G}}^{\mathbb{C}}(w)\overline{\det\mathcal{J}_{\tilde{G}}^{\mathbb{C}}(\eta)}\prod_{j=1}^{l}(1-\langle \tilde{w_j},\tilde{\eta_j}\rangle)^{k_j+1}\prod_{j=k+1}^{n}(1-w_j\overline{\eta_j})^2},
\]
for $w,\eta\in\mathbb{B}^{k_1}\times\cdots\times\mathbb{B}^{k_l}\times\mathbb{D}^*\times\cdots\times\mathbb{D}^*$ and $(z,\zeta)=(\tilde{G}(w),\tilde{G}(\eta))\in\mathbb{H}_{\{k_j\}}^n\times\mathbb{H}_{\{k_j\}}^n$. To apply theorem 4.1 (Schur's test) to conclude the $L^p$ boundedness of the operator $\abs{P_{\mathbb{H}_{\{k_j\}}^n}}$, we only need to modify the positive function $h$ by replacing the factor $(1-\abs{\tilde{\eta}}^2)^s$ by $\prod_{j=1}^l(1-\abs{\tilde{\eta_j}}^2)^s$. It is easy to see the range $(\frac{2n}{n+1},\frac{2n}{n-1})$ for $p$ will not change. Also, the same sequence $\{f_m\}_{m=1}^{\infty}$ works well in our standard model $\mathbb{H}^n_{\{k_j\}}$. Therefore, $P_{\mathbb{H}_{\{k_j\}}^n}$ is not bounded on $L^{p}(\mathbb{H}_{\{k_j\}}^n)$ for $1\le p\le \frac{2n}{n+1}$. This completes the proof. 
\end{proof}

\begin{remark}
As we seen in the proof, the unboundedness of $P_{\mathbb{H}_{\{k_j\}}^n}$ is still valid when $0<p<1$.
\end{remark}

\

\section{proof of theorem 1.1}
\begin{proof}
For the bounded Hartogs domain
\[
\mathbb{H}^n_{\{k_j,\phi_j\}}=\{z\in\mathbb{C}^n|\max_{1\le j \le l}\abs{\phi_j(\tilde{z_j})}<\abs{z_{k+1}}<\cdots<\abs{z_{n}}<1\},
\]
we define a biholomorphism $\Phi:\mathbb{H}^n_{\{k_j,\phi_j\}}\to\mathbb{H}^n_{\{k_j\}}$ by
\[
\Phi(z)=(\phi_1(\tilde{z_1}),\dots,\phi_l(\tilde{z_l}),z_{k+1},\dots,z_n),
\]
with inverse
\[
\Phi^{-1}(w)=(\phi^{-1}_1(\tilde{w_1}),\dots,\phi^{-1}_l(\tilde{w_l}),w_{k+1},\dots,w_n).
\]
By a direct computation, it is not difficult to see, for $z\in\mathbb{H}^n_{\{k_j,\phi_j\}}$,
\[
\det\mathcal{J}_{\Phi}^{\mathbb{C}}(z)=\prod_{j=1}^l\det\mathcal{J}_{\phi_j}^{\mathbb{C}}(\tilde{z_j}).
\]
For each $j$, we have the biholomorphism $\phi_j:\Omega_j\to\mathbb{B}^{k_j}$, with both $\Omega_j$ and $\mathbb{B}^{k_j}$ being smooth and bounded. Since $\mathbb{B}^{k_j}$ is strongly pseudoconvex, it satisfies condition R. By Bell's extension theorem (see \cite{B}), $\phi_j$ and $\phi_j^{-1}$ extend smoothly to the boundaries. So, we can find two positive real numbers $c_j$ and $d_j$, such that for any $\tilde{z_j}\in\Omega_j$,
\[
0<c_j\le \abs{\det\mathcal{J}_{\phi_j}^{\mathbb{C}}(\tilde{z_j})}\le d_j.
\]
Hence, if $c=\prod_{j=1}^lc_j$ and $d=\prod_{j=1}^ld_j$, for $z\in\mathbb{H}^n_{\{k_j,\phi_j\}}$, we have
\[
0<c\le \abs{\det\mathcal{J}_{\Phi}^{\mathbb{C}}(z)}\le d.
\]
Suppose for some $p\in[1,\infty)$, the Bergman projection $P_{\mathbb{H}^n_{\{k_j,\phi_j\}}}$ is bounded on $L^p(\mathbb{H}^n_{\{k_j,\phi_j\}})$, then for $f\in L^p(\mathbb{H}^n_{\{k_j\}})$, we have
\begin{equation}
\begin{split}
\|P_{\mathbb{H}^n_{\{k_j\}}}(f)\|^p_{L^p(\mathbb{H}^n_{\{k_j\}})}
&=\int_{\mathbb{H}^n_{\{k_j,\phi_j\}}}\abs{P_{\mathbb{H}^n_{\{k_j\}}}(f)(\Phi(z))}^p\abs{\det\mathcal{J}_{\Phi}^{\mathbb{C}}(z)}^2dV(z)\\
&=\int_{\mathbb{H}^n_{\{k_j,\phi_j\}}}\abs{P_{\mathbb{H}^n_{\{k_j,\phi_j\}}}(f\circ\Phi\cdot\det\mathcal{J}_{\Phi}^{\mathbb{C}})(z)}^p\abs{\det\mathcal{J}_{\Phi}^{\mathbb{C}}(z)}^{2-p}dV(z)\\
&\le \max\{c^{2-p},d^{2-p}\}\|P_{\mathbb{H}^n_{\{k_j,\phi_j\}}}(f\circ\Phi\cdot\det\mathcal{J}_{\Phi}^{\mathbb{C}})\|^p_{L^p(\mathbb{H}^n_{\{k_j,\phi_j\}})}\\
&\le C\max\{c^{2-p},d^{2-p}\}\|f\circ\Phi\cdot\det\mathcal{J}_{\Phi}^{\mathbb{C}}\|^p_{L^p(\mathbb{H}^n_{\{k_j,\phi_j\}})}\\
&\le Cc^{-\abs{p-2}}d^{\abs{p-2}}\|f\|^p_{L^p(\mathbb{H}^n_{\{k_j\}})},
\end{split}
\end{equation}
for some $C>0$. So the Bergman projection $P_{\mathbb{H}^n_{\{k_j\}}}$ is bounded on $L^p(\mathbb{H}^n_{\{k_j\}})$.

Conversely, if we apply the same argument to $\Phi^{-1}:\mathbb{H}^n_{\{k_j\}}\to\mathbb{H}^n_{\{k_j,\phi_j\}}$, we see that the $L^p$ boundedness of $P_{\mathbb{H}^n_{\{k_j\}}}$ will imply the $L^p$ boundedness of $P_{\mathbb{H}^n_{\{k_j,\phi_j\}}}$. So, for a given $p\in[1,\infty)$, $P_{\mathbb{H}^n_{\{k_j,\phi_j\}}}$ is bounded on $L^p(\mathbb{H}^n_{\{k_j,\phi_j\}})$ if and only if $P_{\mathbb{H}^n_{\{k_j\}}}$ is bounded on $L^p(\mathbb{H}^n_{\{k_j\}})$.

Since the Bergman projection is self-adjoint, given $p,q\in(1,\infty)$ with $\frac{1}{p}+\frac{1}{q}=1$, the $L^p$ boundedness of $P_{\mathbb{H}^n_{\{k_j\}}}$ will imply the $L^q$ boundedness of $P_{\mathbb{H}^n_{\{k_j\}}}$. Hence, by theorem 1.2, for $1\le p<\infty$, $P_{\mathbb{H}^n_{\{k_j\}}}$ is bounded on $L^p(\mathbb{H}^n_{\{k_j\}})$ if and only if $\frac{2n}{n+1}<p<\frac{2n}{n-1}$. Therefore, $P_{\mathbb{H}^n_{\{k_j,\phi_j\}}}$ is bounded on $L^p(\mathbb{H}^n_{\{k_j,\phi_j\}})$ if and only if $p$ is in the range $(\frac{2n}{n+1},\frac{2n}{n-1})$. This completes the proof.
\end{proof}

\

\section{Concluding Remarks}
We have shown that the $L^p$ boundedness of the Bergman projection for a class of bounded Hartogs domains is valid when $p$ is in some range depending only on the dimension. And the range is sharp for $p\in [1,\infty)$. However, the restriction on the form of this class of domains is quite special. It should be pointed out that there could be other generalization of the Hartogs triangle in higher dimensions. We will study this direction in the future. Also, it will be very interesting to fine more techniques to deal with other types of Hartogs domains and other integral operators for Hartogs domains.

\

%    Bibliographies can be prepared with BibTeX using amsplain,
%    amsalpha, or (for "historical" overviews) natbib style.

\bibliographystyle{amsplain}

\end{document}